\title{Free Subshifts with Invariant Measures from the Lov\'asz Local Lemma}
\date{}
\author{Anton~Bernshteyn}
\thanks{Department of Mathematics, University of Illinois at Urbana--Champaign, IL, USA, \texttt{bernsht2@illinois.edu}. This research is supported by the Illinois Distinguished Fellowship.}
\newtheorem{theo}{Theorem}[section]
\newtheorem{prop}[theo]{Proposition}
\newtheorem{corl}[theo]{Corollary}
\newcommand*{\myproofname}{Proof}
\theoremstyle{definition}
\theoremstyle{remark}
\newcommand{\0}{\emptyset}
\newcommand{\set}[1]{\{#1\}}
\newcommand{\dom}{\operatorname{dom}}
\newcommand{\acts}{\curvearrowright}
\newcommand{\N}{\mathbb{N}}
\renewcommand{\epsilon}{\varepsilon}
\renewcommand{\phi}{\varphi}
\numberwithin{equation}{section}
\newcommand{\neutralize}[1]{\expandafter\let\csname c@#1\endcsname\count@}
\begin{document}
	
	\maketitle
	
	\begin{abstract}
		Gao, Jackson, and Seward~\cite{GJS1} proved that every countably infinite group $\Gamma$ admits a nonempty free subshift $X \subseteq \set{0,1}^\Gamma$. Furthermore, a theorem of Seward and Tucker-Drob~\cite{T-DS} implies that every countably infinite group~$\Gamma$ admits a free subshift $X \subseteq \set{0,1}^\Gamma$ that supports an invariant probability measure. Aubrun, Barbieri, and Thomass\'e~\cite{ABT} used the Lov\'asz Local Lemma to give a short alternative proof of the Gao--Jackson--Seward theorem. Recently, Elek~\cite{E} followed another approach involving the Lov\'asz Local Lemma to obtain a different proof of the existence of free subshifts with invariant probability measures for finitely generated sofic groups. Using the measurable version of the Lov\'asz Local Lemma for shift actions established by the author in~\cite{B}, we give a short alternative proof of the existence of such subshifts for arbitrary groups. Moreover, we can find such subshifts in any nonempty invariant open set.
	\end{abstract}
	
	\section{Introduction}
	
	All group actions considered in this note are left actions. Let $\Gamma$ be a countable group. For an arbitrary set~$A$, the \emph{shift action} of $\Gamma$ on $A^\Gamma$ is defined as follows: For all $f \in A^\Gamma$ and $\gamma$, $\delta \in \Gamma$, let\footnote{There are two ways to define the shift action: either by multiplying on the right, or by multiplying on
		the left with the inverse. The latter one is more common; however, the former one is more convenient for our purposes.}
	$$
		(\gamma \cdot f)(\delta) \coloneqq f(\delta \gamma).
	$$
	Whenever $A$ is a topological space, the shift action of $\Gamma$ on $A^\Gamma$ is continuous (with respect to the product topology). A particular case of this is when $A$ is a countable set (assumed to be endowed with the discrete topology). If $A$ is finite, then a closed subset of $A^\Gamma$ invariant under the shift action is called a \emph{subshift}.
	
	It has been a matter of interest to establish which countable groups admit a nonempty subshift such that the induced action of $\Gamma$ on it is free (recall that an action $\Gamma \acts X$ is \emph{free} if for all $x \in X$ and for all $\gamma \in \Gamma$, $\gamma \cdot x = x$ implies that $\gamma$ is the identity element of $\Gamma$). Dranishnikov and Shroeder~\cite{DS} showed that any torsion-free hyperbolic group admits a free subshift. Shortly after, Glasner and Uspenskij~\cite{GU} proved that the same is true for groups that are either Abelian or residually finite. Finally, Gao, Jackson, and Seward~\cite{GJS1, GJS2} extended this result to all countable groups, thus completely solving the problem of existence of free subshifts.
	
	Seward and Tucker-Drob~\cite{T-DS} further developed the techniques of Gao--Jackson--Seward to establish the following very strong result: Whenever a countably infinite group $\Gamma$ is acting freely on a standard Borel space~$X$, there exists an equivariant Borel map $f \colon X \to \set{0,1}^\Gamma$ such that the action of $\Gamma$ on the closure of $f(X)$ is free. Thus, the closure of $f(X)$ is a nonempty free subshift; moreover, if the action of $\Gamma$ on $X$ admits an invariant probability measure, then so does the action of $\Gamma$ on $f(X)$. This implies that every countable group admits a free subshift supporting an invariant probability measure. 
	
	In~\cite{ABT}, Aubrun, Barbieri, and Thomass\'e gave a short alternative proof of the Gao--Jackson--Seward theorem on the existence of nonempty free subshifts. The key ingredient of their proof is the Lov\'asz Local Lemma (the~LLL for short), an immensely important tool in probabilistic combinatorics, that is often applied in order to establish the existence of combinatorial objects (such as colorings) satisfying certain constraints. In a very recent paper~\cite{E}, Elek used an approach based on nonrepetitive graph colorings and inspired by the ideas of~\cite{AGHR} to obtain a different proof of the existence of free subshifts with invariant probability measures in the case of finitely generated sofic groups. Elek's argument also relies heavily on the~LLL. The aim of this note is to use the \emph{measurable} version of the~LLL for shift actions, established by the author in~\cite{B}, to ``amplify'' the Auburn--Barbieri--Thomass\'e construction and get a very short alternative proof of the existence of free subshifts with invariant measures for arbitrary countable groups. Moreover, we show that such subshifts can be made disjoint from any given proper subshift:
	
	\begin{theo}\label{theo:subshifts}
		Let $\Gamma$ be a countably infinite group. If $Y \subsetneq \set{0,1}^\Gamma$ is a closed invariant proper subset, then there exists a closed invariant subset $X \subseteq \set{0,1}^\Gamma \setminus Y$  such that the action of $\Gamma$ on $X$ is free and admits an invariant probability measure.
	\end{theo}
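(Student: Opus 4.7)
The plan is to augment the Aubrun--Barbieri--Thomass\'e LLL construction of free subshifts~\cite{ABT} with an additional family of bad events forcing disjointness from $Y$, and then invoke the measurable version of the LLL for shift actions from~\cite{B} to obtain an invariant probability measure supported on the resulting configuration space.

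Since $Y \subsetneq \set{0,1}^\Gamma$ is closed and $\Gamma$-invariant, its complement is a nonempty open invariant set, so it contains some basic cylinder $[s] = \set{g \in \set{0,1}^\Gamma : g|_{F_0} = s}$ for a finite $F_0 \subseteq \Gamma$ and a pattern $s \in \set{0,1}^{F_0}$. Invariance of $Y$ then forces every translate of $[s]$ to also miss $Y$, so any configuration $g$ in which the pattern $s$ occurs somewhere lies outside $Y$. Consequently, if we can ensure that $s$ appears in every window of shape $F_1 \subseteq \Gamma$ in $f$ for some fixed finite $F_1 \supseteq F_0$, the same will hold for every element of $\overline{\Gamma \cdot f}$, and hence $\overline{\Gamma \cdot f} \cap Y = \0$.

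I would then set up two families of bad events on $\set{0,1}^\Gamma$ equipped with an appropriate Bernoulli measure. The first family, indexed by pairs $(\gamma, \delta)$ with $\gamma \neq e$, follows~\cite{ABT}: $A_{\gamma,\delta}$ states that no witness of $\gamma$-aperiodicity appears in a prescribed finite window around~$\delta$, so avoiding all $A_{\gamma,\delta}$ forces the shift action on $\overline{\Gamma \cdot f}$ to be free. The second family, indexed by $\delta \in \Gamma$, consists of events $B_\delta$ stating that no copy of $s$ occurs in the window $F_1 \delta$; avoiding all $B_\delta$ forces $\overline{\Gamma \cdot f}$ to be disjoint from~$Y$. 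All of these events are clopen and depend on only finitely many coordinates, so the set $X_\ast$ of configurations avoiding them is closed and $\Gamma$-invariant, contained in $\set{0,1}^\Gamma \setminus Y$, and carries a free shift action. By enlarging $F_1$ and tuning the Bernoulli bias, both families can be arranged to satisfy the LLL hypotheses simultaneously.

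Invoking the measurable LLL of~\cite{B} then yields a $\Gamma$-invariant Borel probability measure $\mu$ on $\set{0,1}^\Gamma$ with $\mu(X_\ast) = 1$, and I would take $X \coloneqq \supp(\mu) \subseteq X_\ast$ as the required closed invariant free subshift, with $\mu$ restricting to an invariant probability measure on $X$. The main point I expect to navigate is parameter bookkeeping: choosing the shapes of the windows used in $A_{\gamma,\delta}$ and $B_\delta$, together with the Bernoulli bias, so that the combined dependency graph and event probabilities meet the hypotheses of the measurable LLL of~\cite{B} when the two families are merged. The $A_{\gamma,\delta}$ side is essentially~\cite{ABT}, while the $B_\delta$ events can be made arbitrarily improbable by enlarging $F_1$, so this should reduce to a careful optimization rather than a conceptual obstacle.
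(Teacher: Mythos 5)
Your proposal is essentially the paper's own argument: the paper likewise picks a cylinder $U_\psi$ in the complement of $Y$, adds to the Aubrun--Barbieri--Thomass\'e instance an invariant family of bad patterns forcing a copy of $\psi$ to occur in every translate of a fixed finite window (so that $\operatorname{Sol}(B)$ is a closed invariant set disjoint from $Y$ on which the action is free), and then applies the measurable LLL of~\cite{B} to push forward an invariant measure onto $\operatorname{Sol}(B)$. The parameter bookkeeping you defer is carried out in the paper by taking $N$ disjoint translates of $\dom(\psi)$ for the new events and windows of growing size $n+M$ for the freeness events, exactly as your plan anticipates.
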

	
	
	\section{Proof of Theorem~\ref{theo:subshifts}}
	
	\subsection{The LLL and the measurable LLL}
	
	The general statement of the measurable LLL in \cite{B} is quite technical; however, we will only need a much simpler version of it. The definitions given below are special cases of the ones in \cite[Sections~1.2 and~5.1]{B}.
	
	Let $A$ be a countable set. Let $[A]^{<\infty}$ denote the set of all nonempty finite subsets of~$A$ and let $[A]^{<\infty}_{\set{0,1}}$ be the set of all functions $\phi \colon S \to \set{0,1}$, where $S \in [A]^{<\infty}$. An \emph{instance (of the~LLL)} over~$A$ is any subset ${B} \subseteq [A]^{<\infty}_{\set{0,1}}$. 
	For $S \in [A]^{<\infty}$, let $B_S \coloneqq \set{\phi \in {B} \,:\, \dom(\phi) = S}$. The \emph{domain} of ${B}$ is the set $\dom({B}) \coloneqq \set{S \in [A]^{<\infty} \,:\, {B}_S \neq \0}$. A~\emph{solution} for ${B}$ is a map $f \colon A \to \set{0,1}$ such that for all $S \in [A]^{<\infty}$, we have $f \vert S \not \in {B}$. The set of all solutions for an instance~${B}$ is denoted $\operatorname{Sol}({B})$. Note that $\operatorname{Sol}({B})$ is a closed subset of $\set{0,1}^A$.
	
	An instance ${B}$ is \emph{correct} if there is a function $p \colon \dom({B}) \to [0;1)$ such that for all $S \in \dom({B})$,
	\begin{equation}\label{eq:correct}
	\frac{|{B}_S|}{2^{|S|}} \leq \frac{p(S)}{1 - p(S)} \prod_{\substack{S' \in \dom({B}):\\ S' \cap S \neq \0}} (1 - p(S')).
	\end{equation}
	
	\begin{theo}[{A corollary of \textbf{the Lov\'asz Local Lemma}; Erd\H os--Lov\'asz}]\label{theo:LLL}
		Let $A$ be a countable set and let ${B}$ be a correct instance over $A$. Then $\operatorname{Sol}({B}) \neq \0$.
	\end{theo}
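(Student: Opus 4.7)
The plan is to apply the (asymmetric) Lov\'asz Local Lemma to the uniform product measure on $\set{0,1}^A$, with one bad event per element of $\dom({B})$, and then extract an actual solution from the resulting positive-measure ``good'' set by a compactness argument.

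First, I would equip $\set{0,1}^A$ with the product probability measure $\mu$ under which each coordinate is independent and uniform. For each $S \in \dom({B})$ define the \emph{bad event}
$$E_S \,\coloneqq\, \set{f \in \set{0,1}^A : f|S \in {B}_S}.$$
Since ${B}_S \subseteq \set{0,1}^S$ is finite, $E_S$ is a finite disjoint union of basic clopen cylinders and hence itself clopen; moreover $\mu(E_S) = |{B}_S|/2^{|S|}$. Crucially, $E_S$ depends only on the coordinates indexed by $S$, so $E_S$ is mutually independent from every subfamily of $\set{E_{S'} : S' \cap S = \0}$. This identifies the relation ``$S' \cap S \neq \0$'' appearing in~(\ref{eq:correct}) as the edge set of a valid dependency graph for the family $(E_S)_{S \in \dom({B})}$, and the hypothesis~(\ref{eq:correct}) becomes precisely the asymmetric LLL criterion with the prescribed weights $p(S)$.

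The heart of the argument is then the standard inductive estimate underlying the LLL: by induction on $|F|$, for every finite $F \subseteq \dom({B})$ and every $S \in F$,
$$\mu\!\left(E_S \,\middle|\, \bigcap_{S' \in F \setminus \set{S}} \overline{E_{S'}}\right) \,\leq\, p(S).$$
The induction step partitions $F \setminus \set{S}$ into the ``neighbours'' $\set{S' : S' \cap S \neq \0}$ and the ``non-neighbours'' $\set{S' : S' \cap S = \0}$; mutual independence lets $E_S$ ignore the non-neighbour part of the conditioning, while the inductive hypothesis bounds the neighbour contributions, reducing everything to exactly the inequality~(\ref{eq:correct}). Multiplying these conditional bounds telescopically across $F$ then yields
$$\mu\!\left(\bigcap_{S \in F} \overline{E_S}\right) \,\geq\, \prod_{S \in F}\,(1 - p(S)) \,>\, 0,$$
so in particular $\bigcap_{S \in F} \overline{E_S} \neq \0$ for every finite $F \subseteq \dom({B})$.

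Finally, I would upgrade this to the full (countable) family by compactness: $\set{0,1}^A$ is compact by Tychonoff's theorem, each $\overline{E_S}$ is closed, and the collection $\set{\overline{E_S}}_{S \in \dom({B})}$ has the finite intersection property by the previous step. Hence
$$\operatorname{Sol}({B}) \,=\, \bigcap_{S \in \dom({B})} \overline{E_S} \,\neq\, \0,$$
which is the desired conclusion. The only real obstacle is the inductive conditional estimate, and that is precisely where the classical proof of the LLL concentrates all of its technical work; the dependency-graph identification and the compactness upgrade at the end are essentially formal.
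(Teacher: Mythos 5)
Your proof is correct and follows essentially the same route the paper sketches: reduce to the finite case via the standard asymmetric LLL for the uniform product measure (where the paper simply cites the full LLL and \cite[Corollary~1.7]{B}, you inline the usual inductive conditional estimate), and then pass to infinite $A$ by compactness of $\set{0,1}^A$ and the finite intersection property. The only detail worth making explicit is that the product in~\eqref{eq:correct} includes the term $S' = S$, so the factor $p(S)/(1-p(S))$ absorbs it and the hypothesis really is the classical criterion $\mu(E_S) \leq p(S)\prod_{S' \neq S,\, S' \cap S \neq \0}(1 - p(S'))$.
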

	
	See~\cite[Lemma~5.1.1]{AlonSpencer} for the full statement of the~LLL. Theorem~\ref{theo:LLL} is a particular case of the so-called \emph{variable version} of the~LLL (the name is due to Kolipaka and Szegedy~\cite{KSz}), which is the form in which the~LLL is usually applied. It is straightforward to deduce Theorem~\ref{theo:LLL} from the full~LLL when the set $A$ is finite (see~\cite[Corollary~1.7]{B}); the case of infinite $A$ then follows by compactness. A~more general version of Theorem~\ref{theo:LLL} for infinite $A$ was proved by Kun~\cite{Kun} using the Moser--Tardos algorithmic approach to the~LLL~\cite{MT}, see~\cite[Theorem~1.8]{B}. (The Moser--Tardos method also plays a crucial role in the proof of our measurable LLL.)
	
	From now on, let $\Gamma$ be a countably infinite group. We define the shift action of $\Gamma$ on the set $[\Gamma]^{<\infty}_{\set{0,1}}$ by declaring that for all $\phi \in [\Gamma]^{<\infty}_{\set{0,1}}$ and $\gamma$, $\delta \in \Gamma$, $\delta \in \dom(\gamma \cdot \phi)$ if and only if $\delta\gamma \in \dom(\phi)$, in which case
	$$(\gamma \cdot \phi)(\delta) \coloneqq \phi(\delta\gamma).$$
	An instance ${B}$ over $\Gamma$ is \emph{invariant} if it is closed under the shift action of $\Gamma$ on $[\Gamma]^{<\infty}_{\set{0,1}}$. Note that if ${B}$ is an invariant instance over $\Gamma$, then the set $\operatorname{Sol}({B})$ is invariant under the shift action of $\Gamma$ on $\set{0,1}^\Gamma$, and hence is a subshift.
	
	Let $\alpha \colon \Gamma \acts (X, \mu)$ be a measure-preserving action of~$\Gamma$ on a standard probability space $(X, \mu)$. A \emph{measurable solution} over $\alpha$ for an invariant instance ${B}$ is a Borel function $f \colon X' \to \set{0,1}$, defined on an invariant $\mu$-conull Borel subset $X'$ of $X$, such that for all $x \in X'$, the map 
	$$f_x \colon \Gamma \to \set{0,1} \colon \gamma \mapsto f(\gamma \cdot x)$$ belongs to $\operatorname{Sol}({B})$.
	
	\begin{prop}
		Let ${B}$ be an invariant instance over $\Gamma$ and let $\alpha \colon \Gamma \acts (X, \mu)$ be a measure-preserving action of $\Gamma$ on a standard probability space $(X, \mu)$. Suppose that ${B}$ admits a measurable solution over $\alpha$. Then the action of $\Gamma$ on $\operatorname{Sol}({B})$ admits an invariant probability measure.
	\end{prop}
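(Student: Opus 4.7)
The plan is to transport the invariant measure $\mu$ on $X$ to a $\Gamma$-invariant probability measure on $\operatorname{Sol}(B)$ via the obvious equivariant map. Concretely, given a measurable solution $f \colon X' \to \set{0,1}$, I would define
\[
F \colon X' \to \set{0,1}^\Gamma, \qquad F(x) \coloneqq f_x,
\]
where $f_x(\gamma) = f(\gamma \cdot x)$ as in the definition, and set $\nu \coloneqq F_\ast(\mu \vert X')$.

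First I would verify that $F$ is Borel: for each fixed $\gamma \in \Gamma$, the coordinate map $x \mapsto F(x)(\gamma) = f(\gamma \cdot x)$ is Borel because $\alpha$ is a Borel action and $f$ is Borel, and Borel measurability into the countable product $\set{0,1}^\Gamma$ is determined coordinate-wise. Next I would check equivariance: for $\gamma$, $\delta \in \Gamma$ and $x \in X'$,
\[
F(\gamma \cdot x)(\delta) \;=\; f(\delta \gamma \cdot x) \;=\; F(x)(\delta \gamma) \;=\; (\gamma \cdot F(x))(\delta),
\]
using the definitions of $F$ and of the shift action on $\set{0,1}^\Gamma$, so $F(\gamma \cdot x) = \gamma \cdot F(x)$. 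Finally, the defining property of a measurable solution gives $F(x) \in \operatorname{Sol}({B})$ for every $x \in X'$.

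From these three facts the conclusion is immediate: $\nu$ is a Borel probability measure on $\set{0,1}^\Gamma$ whose support lies in $\operatorname{Sol}({B})$, and $\Gamma$-equivariance of $F$ combined with $\Gamma$-invariance of $\mu$ shows that for any Borel $E \subseteq \operatorname{Sol}({B})$ and any $\gamma \in \Gamma$,
\[
\nu(\gamma \cdot E) \;=\; \mu(F^{-1}(\gamma \cdot E)) \;=\; \mu(\gamma \cdot F^{-1}(E)) \;=\; \mu(F^{-1}(E)) \;=\; \nu(E).
\]
Thus $\nu$, viewed as a measure on $\operatorname{Sol}({B})$, is the desired $\Gamma$-invariant probability measure.

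There is no serious obstacle here; this is essentially a bookkeeping argument. The one point that requires a little care is confirming that the shift-action conventions for $\Gamma \acts [\Gamma]^{<\infty}_{\set{0,1}}$ and $\Gamma \acts \set{0,1}^\Gamma$ are compatible with the convention $f_x(\gamma) = f(\gamma \cdot x)$, so that equivariance comes out with the right handedness; this is simply a matter of unwinding the definitions introduced earlier in the section.
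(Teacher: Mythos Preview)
Your proposal is correct and is exactly the paper's own argument, only with the routine details (Borel measurability of the map $x \mapsto f_x$, the equivariance computation, and invariance of the pushforward) spelled out. The paper simply defines $\pi(x) \coloneqq f_x$ and notes that $\pi$ is equivariant, so $\pi_\ast(\mu)$ is the desired invariant measure.
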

	\begin{proof}
		Let $f$ be a measurable solution for ${B}$ over $\alpha$ and define a map $\pi \colon \dom(f) \to \operatorname{Sol}({B})$ by setting $\pi(x) \coloneqq f_x$ for all $x \in \dom(f)$. By definition, $\pi$ is equivariant, so the pushforward measure $\pi_\ast(\mu)$ is invariant, as desired.
	\end{proof}
	
	The following is a simplified special case of~\cite[Lemma~5.17]{B}, which, in turn, is a special case of \cite[Theorem~5.4]{B}:
	
	\begin{theo}[{\cite[Lemma~5.17]{B}}]\label{theo:meas_LLL}
		Let $\lambda$ denote the Lebesgue probability measure on $[0;1]$. If ${B}$ is a correct invariant instance over $\Gamma$, then ${B}$ admits a measurable solution over the shift action $\Gamma \acts ([0;1]^\Gamma, \lambda^\Gamma)$.
	\end{theo}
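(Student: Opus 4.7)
The plan is to realize the measurable solution as an almost-sure limit of a $\Gamma$-equivariant Moser--Tardos-type resampling procedure driven by the random source $x \in [0;1]^\Gamma$. First, fix a Borel measure-preserving isomorphism $[0;1] \cong [0;1]^\N$, thereby reinterpreting each coordinate $x(\gamma)$ as an independent sequence of uniform samples; the $t$-th sample at $\gamma$ will serve as fresh randomness in case $\gamma$ is resampled at round $t$, with an extra layer set aside for priorities. Use the round-$0$ samples to define the initial Borel, $\Gamma$-equivariant coloring $\Phi_0 \colon [0;1]^\Gamma \to \set{0,1}^\Gamma$ by $\Phi_0(x)(\gamma) \coloneqq \lfloor 2 u_0(\gamma) \rfloor$, producing an i.i.d.\ Bernoulli$(1/2)$ configuration.

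Next, I would inductively construct Borel, equivariant maps $\Phi_t \colon [0;1]^\Gamma \to \set{0,1}^\Gamma$ by a parallel resampling rule. At stage $t \geq 1$, the bad events for $x$ are the sets $S \in \dom({B})$ with $\Phi_{t-1}(x) \vert S \in {B}_S$. Because the classical sequential Moser--Tardos choice of ``which bad event to resample first'' is not equivariant, I would use an additional independent layer of $[0;1]$-randomness at each vertex to assign i.i.d.\ priorities to bad events; an event $S$ is selected for resampling if and only if its priority strictly exceeds the priorities of every other currently-bad $S' \in \dom({B})$ with $S' \cap S \neq \0$. The resulting set $V_t(x) \subseteq \Gamma$ of vertices marked for resampling is then a disjoint union of supports of selected events, and is Borel and $\Gamma$-equivariant (using that ${B}$ is invariant). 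Define $\Phi_t(x)$ by replacing $\Phi_{t-1}(x)(\gamma)$ with $\lfloor 2 u_t(\gamma) \rfloor$ at each $\gamma \in V_t(x)$ and leaving other coordinates fixed.

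Finally, I would invoke the correctness hypothesis \eqref{eq:correct} to prove convergence. A Moser--Tardos-style \emph{witness-tree} analysis, adapted to this infinite, parallel, priority-broken setting (as in \cite{Kun} and \cite{B}), should yield an expected-resampling bound of the form $\mathbb{E}[\#\set{t : \id \in V_t(x)}] < \infty$, with the finiteness coming precisely from the convergent products on the right-hand side of \eqref{eq:correct}. By $\Gamma$-equivariance, every vertex is then almost surely resampled only finitely often, so the pointwise limit $\Phi(x) \coloneqq \lim_t \Phi_t(x)$ exists on a conull invariant Borel set $X'$, and stabilization on every finite $S \in \dom({B})$ forces $\Phi(x) \in \operatorname{Sol}({B})$; setting $f(x) \coloneqq \Phi(x)(\id)$ gives the desired measurable solution. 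The main obstacle is precisely this last step: verifying that the witness-tree bound survives the modifications needed to make the algorithm simultaneously parallel, Borel, and equivariant, since witness trees in the parallel variant are no longer single-path objects and their expected weight must be controlled using the full strength of \eqref{eq:correct}. This is the technical core of \cite[Lemma~5.17]{B}.
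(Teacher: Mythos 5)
The paper does not actually prove Theorem~\ref{theo:meas_LLL}; it imports it verbatim as a special case of \cite[Lemma~5.17]{B}, remarking only that the Moser--Tardos method is the engine behind it. Your outline is therefore not comparable to a proof in this paper, but it does match the strategy the paper attributes to \cite{B}: split each coordinate of $[0;1]^\Gamma$ into countably many independent uniforms via $[0;1]\cong[0;1]^\N$, run an equivariant parallel resampling procedure, and control it with witness trees. As a roadmap this is the right one.

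As a proof, however, it has genuine gaps, and you name the largest one yourself: the bound $\mathbb{E}[\#\set{t : \id \in V_t(x)}] < \infty$ is asserted (``should yield''), not derived from \eqref{eq:correct}. That derivation is the entire content of the theorem --- one must enumerate the witness trees compatible with the parallel, priority-broken dynamics, couple them to the random source, and show the resulting sum converges under \eqref{eq:correct}; none of that is carried out. Two further points would break the argument as literally stated. First, the selection rule ``$S$ is resampled iff its priority strictly exceeds that of every currently-bad $S'$ with $S'\cap S\neq\0$'' presupposes that this comparison has a winner; correctness only forces $\sum_{S'\cap S\neq\0} p(S')<\infty$, not that the overlap neighborhood of $S$ in $\dom(B)$ is finite, so among infinitely many i.i.d.\ continuous priorities there is almost surely no maximum and the procedure can stall. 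You need either an a priori argument that only finitely many overlapping events are bad at each round, or a different (e.g.\ hierarchical or Borel-combinatorial) selection scheme as in \cite{B}. Second, ``stabilization on every finite $S$ forces $\Phi(x)\in\operatorname{Sol}(B)$'' is not automatic: an event could remain bad forever while never winning the priority contest against overlapping events whose non-shared vertices keep being resampled. One must show that a persistently bad event is almost surely eventually selected, which again interacts with the neighborhood-finiteness issue. So the proposal is a correct identification of the method but defers its technical core, which is exactly where the theorem lives.
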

	
	\begin{corl}\label{corl:inv_meas}
		If ${B}$ is a correct invariant instance over $\Gamma$, then the action of $\Gamma$ on $\operatorname{Sol}({B})$ admits an invariant probability measure.
	\end{corl}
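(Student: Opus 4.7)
The plan is to derive this as an immediate combination of the two results stated just prior, namely Theorem~\ref{theo:meas_LLL} and the Proposition preceding it. Since the hypotheses of Corollary~\ref{corl:inv_meas} are exactly the hypotheses of Theorem~\ref{theo:meas_LLL}, the first step is to feed $B$ directly into that theorem.

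Concretely, I would first apply Theorem~\ref{theo:meas_LLL} to the correct invariant instance $B$. This produces a measurable solution $f \colon X' \to \set{0,1}$ for $B$ over the shift action $\Gamma \acts ([0;1]^\Gamma, \lambda^\Gamma)$, where $X'$ is an invariant $\lambda^\Gamma$-conull Borel subset of $[0;1]^\Gamma$. By the definition of measurable solution, for every $x \in X'$ the orbit map $\gamma \mapsto f(\gamma \cdot x)$ lies in $\operatorname{Sol}(B)$.

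Next, I would invoke the Proposition. Its hypotheses are satisfied: the instance $B$ is invariant (given), the shift action $\Gamma \acts ([0;1]^\Gamma, \lambda^\Gamma)$ is a measure-preserving action on a standard probability space, and $f$ is a measurable solution over this action by the previous step. The Proposition then asserts exactly that $\Gamma \acts \operatorname{Sol}(B)$ admits an invariant probability measure (obtained as the pushforward of $\lambda^\Gamma$ along the equivariant map $x \mapsto f_x$), which is the conclusion of the corollary.

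There is no real obstacle here; the corollary is essentially a repackaging. The only mild point worth verifying explicitly is that the Proposition applies even though the measurable solution is defined only on a conull invariant subset $X'$ rather than on all of $[0;1]^\Gamma$, but this is built into the statement of the Proposition and causes no issue: the pushforward is defined using the restriction of $\lambda^\Gamma$ to $X'$, which is still a $\Gamma$-invariant probability measure.
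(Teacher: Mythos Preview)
Your proposal is correct and matches the paper's approach: the corollary is an immediate consequence of Theorem~\ref{theo:meas_LLL} and the preceding Proposition, and indeed the paper does not spell out any further argument. Your remark about the conull domain is accurate and already accounted for in the Proposition's statement.
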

	
	Aubrun, Barbieri, and Thomass\'e~\cite{ABT} constructed a correct invariant instance $B$ over $\Gamma$ such that the action of $\Gamma$ on $\operatorname{Sol}({B})$ is free. In conjunction with Corollary~\ref{corl:inv_meas}, this implies that $\operatorname{Sol}(B)$ is a free subshift supporting an invariant probability measure. Below we present a modified version of the Auburn--Barbieri--Thomass\'e construction that also guarantees that $\operatorname{Sol}(B)$ is disjoint from a given subshift $Y$.
	
	\subsection{Constructing the instance}
	
	For $\phi \in [\Gamma]^{<\infty}_{\set{0,1}}$, let
	$$
		U_\phi \coloneqq \set{f \in \set{0,1}^\Gamma \,:\, f \vert \dom(\phi) = \phi}.
	$$
	Suppose that $Y \subsetneq \set{0,1}^\Gamma$ is a given closed invariant proper subset. Let $\psi \in [\Gamma]^{<\infty}_{\set{0,1}}$ be such that $Y \cap U_\psi = \0$. Since $Y$ is invariant, $Y \cap \gamma \cdot U_\psi = \0$ for all $\gamma \in \Gamma$. Set $k \coloneqq |\dom(\psi)|$.
	
	Let $N$ be a sufficiently large positive integer (to be determined later). Let $D_0$ be any subset of $\Gamma$ of size $N$ such that for all $\delta$, $\delta' \in D_0$, if $\delta \neq \delta'$, then $\dom(\psi) \delta \cap \dom(\psi) \delta' = \0$ (such $D_0$ exists since $\Gamma$ is infinite). Let $F_0 \coloneqq \dom(\psi) D_0$ and let $B_0'$ denote the set of all functions $\phi \colon F_0 \to \set{0,1}$ such that for all $\delta\in D_0$, $(\delta \cdot \phi)\vert \dom(\psi) \neq \psi$. A direct calculation shows that
	\begin{equation}\label{eq:zero_bound}
		\frac{|B_0'|}{2^{|F_0|}} = \left(1 - \frac{1}{2^k}\right)^N.
	\end{equation}
	Let $B_0 \coloneqq \Gamma \cdot B_0'$. By construction, $B_0$ is an invariant instance over $\Gamma$. Moreover, $\operatorname{Sol}(B_0) \cap Y = \0$. Indeed, suppose that $f \in \operatorname{Sol}(B_0)$. In particular, $f \in \operatorname{Sol}(B_0')$, i.e., there is some $\delta \in D_0$ such that $(\delta \cdot f) \vert \dom(\psi) =\psi$. This means that $f \in \delta^{-1} \cdot U_\psi$, and thus $f \not \in Y$.
	
	Let $M$ be another large integer (that will also be determined later) and let $(\gamma_n)_{n=1}^\infty$ be an enumeration of the non-identity elements of $\Gamma$ (with the numbering starting at~$1$). For each $n \geq 1$, fix a subset $D_n$ of $\Gamma$ of size $n+M$ such that $D_n \cap (D_n\gamma_n) = \0$ (this is again possible since $\Gamma$ is infinite). Set $F_n \coloneqq D_n \cup (D_n \gamma_n)$ and let ${B}'_n$ denote the set of all functions $\phi \colon F_n \to \set{0, 1}$ such that for all $\delta \in D_n$, $\phi(\delta) = \phi(\delta\gamma_n)$. Another straightforward calculation shows that for all $n \geq 1$,
	\begin{equation}\label{eq:one_bound}
		\frac{|B_n'|}{2^{|F_n|}} = \frac{1}{2^{n+M}}.
	\end{equation}
	Let ${B}_n \coloneqq \Gamma \cdot {B}_n'$ and ${B}_{\geq 1} \coloneqq \bigcup_{n=1}^\infty {B}_n$. Again, ${B}_{\geq 1}$ is an invariant instance over $\Gamma$, and, moreover, the action of $\Gamma$ on $\operatorname{Sol}(B_{\geq 1})$ is free. Indeed, suppose that $f \colon \Gamma \to \set{0,1}$ satisfies $\gamma_n \cdot f = f$ for some $n \geq 1$. In other words, for all $\delta \in \Gamma$, $f(\delta) = (\gamma_n \cdot f)(\delta) = f(\delta \gamma_n)$. This implies that $f \vert F_n \in {B}_n'$, i.e., $f \not \in \operatorname{Sol}({B}_{\geq 1})$.
	
	Finally, let $B \coloneqq B_0 \cup B_{\geq 1}$. Since $\operatorname{Sol}(B) = \operatorname{Sol}(B_0) \cap \operatorname{Sol}(B_{\geq 1})$, we have $\operatorname{Sol}(B) \cap Y = \0$ and the action of $\Gamma$ on $\operatorname{Sol}(B)$ is free. It only remains to show that $B$ is a correct instance. By definition, we have
	$$
		\dom(B) = \bigcup_{n=0}^\infty \dom(B_n) = \bigcup_{n=0}^\infty\set{F_n \gamma \,:\, \gamma \in \Gamma}.
	$$
	Notice that if $n \in \N$ and $S \in \dom(B_n)$, then for each $m \in \N$,
	\begin{equation}\label{eq:degree_bound}
		|\set{S' \in \dom(B_m)\,:\, S' \cap S \neq \0}| \leq |F_n|\cdot |F_m|.
	\end{equation}
	Indeed, let $S \eqqcolon F_n \gamma$, where $\gamma \in \Gamma$. If $S' = F_m\gamma'$ and $S' \cap S \neq \0$, then there exist some $\delta \in F_n$ and $\delta' \in F_m$ such that $\delta \gamma = \delta' \gamma'$. The choice of $\delta$ and $\delta'$ uniquely determines $\gamma'$, and thus~$S'$, and the number of such choices is exactly $|F_n| \cdot |F_m|$.
	
	To verify the correctness of ${B}$, we need to find a function $p \colon\dom({B}) \to [0;1)$ satisfying~\eqref{eq:correct} for all $S \in \dom({B})$. To that end, choose any positive real number $a < 1$ such that
	$$
		b \coloneqq a \cdot \left(1 - \frac{1}{k^2N^2}\right)^{2kN} > \frac{1}{2},
	$$
	and set
	$$
		p(S) \coloneqq \begin{cases}
			1/k^2 N^2  &\text{if }S \in \dom(B_0);\\
			a^{n + M} &\text{if } S \in \dom(B_n) \text{ for } n\geq 1.
		\end{cases}
	$$
	Due to~\eqref{eq:zero_bound}, \eqref{eq:one_bound}, and~\eqref{eq:degree_bound}, inequality~\eqref{eq:correct} is satisfied as long as we have
	\begin{equation}
	\label{eq:zero}
		\left(1 - \frac{1}{2^{k}}\right)^N \leq \frac{1}{k^2 N^2} \cdot \left(1 - \frac{1}{k^2 N^2}\right)^{k^2 N^2-1} \cdot \prod_{m = 1}^\infty (1 - a^{m+M})^{2(m+M)kN},
	\end{equation}
	and for all $n \geq 1$,
	\begin{equation}\label{eq:one}
		\frac{1}{2^{n + M}} \leq \frac{a^{n+M}}{1 - a^{n + M}} \cdot \left(1 - \frac{1}{k^2N^2}\right)^{2(n+M)kN}\cdot \prod_{m = 1}^\infty (1 - a^{m+ M})^{4(m+M)(n+M)}.
	\end{equation}
	Let us first deal with~\eqref{eq:zero}. Since $(1 - 1/d)^{d-1} \geq e^{-1}$ for all $d \geq 2$, we obtain
	$$
		\frac{1}{k^2 N^2} \cdot \left(1 - \frac{1}{k^2 N^2}\right)^{k^2 N^2-1} \geq \frac{1}{ek^2 N^2}.
	$$
	Choosing $N$ large enough, we can make sure that
	$$
		\left(1 - \frac{1}{2^{k}}\right)^N < \frac{1}{ek^2 N^2}.
	$$
	From now on, we will consider $N$ fixed. Since the infinite product $\prod_{m=1}^\infty(1 - a^m)^{2mkN}$ converges,
	$$
		\lim_{M \to \infty}\prod_{m = 1}^\infty (1 - a^{m+M})^{2(m+M)kN} = \lim_{M \to \infty} \prod_{m = M+ 1}^\infty (1 - a^m)^{2mkN} = 1,
	$$
	so~\eqref{eq:zero} holds for all sufficiently large $M$.
	
	Now we proceed to~\eqref{eq:one}. We have
	\begin{align*}
		\frac{a^{n+M}}{1 - a^{n + M}} \cdot \left(1 - \frac{1}{k^2N^2}\right)^{2(n+M)kN}\cdot \prod_{m = 1}^\infty (1 - a^{m+ M})^{4(m+M)(n+M)} 
		\geq \left(b \cdot \prod_{m = M+1}^\infty (1 - a^{m})^{4m} \right)^{n+ M}.
	\end{align*}
	Taking $M$ so large that $\prod_{m = M+1}^\infty (1 - a^m)^{4m} \geq 1/(2b)$, we obtain
	$$
		\left(b \cdot \prod_{m = M+1}^\infty (1 - a^{m})^{4m} \right)^{n+ M} \geq \frac{1}{2^{n+M}},
	$$
	thus establishing~\eqref{eq:one}.
	
	\subsection*{Acknowledgment}
	
	I am grateful to Robin Tucker-Drob for his helpful comments.
	
	
	
\end{document}